\newtheorem{theorem}{Theorem}
\newtheorem{assumption}[theorem]{Assumption}
\newtheorem{definition}[theorem]{Definition}
\newtheorem{lemma}[theorem]{Lemma}
\newtheorem{remark}[theorem]{Remark}
\newenvironment{proof}[1][Proof]{\textbf{#1.} }{\ \hspace*{\fill} \rule{0.5em}{0.5em}}
\def\oP{P^{1}}
\def\R{\mathbb{R}}
\newcommand{\sub}[2]{(#1)_{(#2)}}
\newcommand{\bi}{\begin{itemize}}
\newcommand{\ei}{\end{itemize}}
\newcommand{\bd}{\begin{displaymath}}
\newcommand{\ed}{\end{displaymath}}
\newcommand{\be}{\begin{eqnarray*}}
\newcommand{\ee}{\end{eqnarray*}}
\begin{document}
%
\title{Transfer Operator-Based Approach for Optimal Stabilization of Stochastic System}

%
%
%


\author{ Apurba Kumar Das, Arvind Raghunathan, and  Umesh Vaidya
\thanks{Financial support from the National Science Foundation grant CNS-1329915 and ECCS-1150405 is gratefully acknowledged. U. Vaidya is with the Department of Electrical \& Computer Engineering,
Iowa State University, Ames, IA 50011.}}
\maketitle
\begin{abstract}
In this paper we develop linear transfer Perron-Frobenius operator-based approach for optimal stabilization of stochastic nonlinear systems. One of the main highlights of the proposed transfer operator based approach is that both the theory and computational framework developed for the optimal stabilization of deterministic dynamical systems in \cite{raghunathan2014optimal} carries over to the stochastic case with little change. 
The optimal stabilization problem is formulated as an infinite dimensional linear program. Set oriented numerical methods are proposed for the finite dimensional approximation of the transfer operator and the controller.  Simulation results are presented to verify the developed framework. 
\end{abstract}
\section{Introduction}
Transfer operator-based methods have attracted lot of attention lately for problems involving dynamical systems analysis and design. In particular, transfer operator-based methods are used for identifying steady state dynamics of the system from the invariant measure of transfer operator, identifying almost invariant sets, and coherent structures \cite{Dellnitz00,Dellnitiz_almostinvariant,froyland2009almost}. The spectral analysis of transfer operators are also applied for reduced order modeling of dynamical systems with applications to building systems, power grid, and fluid mechanics \cite{budivsic2012applied,amit_koopmanestiamtor}. Operator-theoretic methods have also been successfully applied to  address design problems in control dynamical systems. In particular, transfer operator methods are used for almost everywhere stability verification, control design, nonlinear estimation, and for solving optimal sensor placement problem \cite{VaidyaMehtaTAC,raghunathan2014optimal,Rajeev_continuous_time_journal,Vaidya_CLM,vaidya2007observability,sinha2016operator,MVCDC05}.

In this paper, we continue with the long series of work on the application of transfer operator methods for stability verification and stabilization of nonlinear systems. We develop an analytical and computational framework for the application of transfer operator methods for the stabilization of stochastic nonlinear systems. In \cite{vaidya2015stochastic}, we introduced Lyapunov measure for stability verification of stochastic nonlinear systems. We proved that the existence of the Lyapunov measure verifies weaker set-theoretic notion of almost everywhere stochastic stability for discrete-time stochastic systems. Weaker notion of almost everywhere stability was introduced in \cite{Rantzer01} for continuous time deterministic systems and in \cite{van2006almost} for continuous time stochastic systems.  In this paper we extend the application of Lyapunov measure for optimal stabilization of stochastic nonlinear systems. Optimal stabilization of stochastic systems is posed as an infinite dimensional linear program. 
Set-oriented numerical methods are used for the finite dimensional approximation of the transfer operator and the linear program. A key advantage of the proposed transfer operator-based 
approach for stochastic stability analysis and controller synthesis is that all the stability results 
along with the computation framework carries over from the deterministic systems  \cite{VaidyaMehtaTAC,raghunathan2014optimal} to the stochastic systems. The only 
difference in the stochastic setting is that the transfer Perron-Frobenius operator is defined 
for the stochastic system.

The results developed in this paper draw parallels from following papers.    
Lasserre, Hern\'{a}ndez-Lerma, and co-workers \cite{LassHernBook,approxInfLP}  formulated the control of Markov processes as a solution of the HJB equation.  In \cite{Hernandez_ocp, Lasserre_ocp,Gait_ocp}, 
solutions to stochastic
and deterministic optimal control problems are proposed, using
a linear programming approach or using a sequence of LMI relaxations. Our paper also draws some connection to research on optimization and
stabilization of controlled Markov chains discussed in \cite{Meyn_sadhana}. Computational techniques based on the viscosity solution of the HJB equation is proposed for the approximation of the 
value function and optimal controls in \cite[Chapter VI]{viscosity_solnHJB_book}.

Our proposed method, in particular the computational approach, draws some similarity with the above discussed references on the approximation of the solution of the HJB equation \cite{cell-cell1, Junge_Osinga, viscosity_solnHJB_book,Junge_scl_05}.
Our method, too, relies on discretization of state space to obtain globally optimal stabilizing  control. However, our proposed approach differs from the above references in the following two fundamental ways. The first main difference arises due to adoption of non-classical weaker set-theoretic notion of almost everywhere stability for optimal stabilization. 
The second main difference compared to references \cite{Meyn_sadhana} and \cite{viscosity_solnHJB_book} is in the use of the discount factor $\gamma>1$ in the cost function.   
The discount factor plays an important role in controlling the effect of finite dimensional discretization or the approximation process on the true solution. In particular, by allowing for the discount factor, $\gamma$, to be greater than one, it is possible to ensure that the control obtained using the finite dimensional approximation is {\it truly} stabilizing 
for the nonlinear system \cite{Vaidya_CLM,arvind_ocp_online}. 

The paper is organized as follows. In section \ref{section_lymeas} we present brief overview of results from \cite{vaidya2015stochastic} on Lyapunov measure for stochastic stabilization. In \ref{section_lymeasoptimal} results on application of Lyapunov measure for optimal stabilization are presented. In section \ref{section_computation}, computational framework based on set-oriented numerical methods for finite dimensional approximation of Lyapunov measure and optimal control is presented. Simulation results are presented in section \ref{section_examples} followed by conclusions in section \ref{section_conclusion}.

\section{Lyapunov measure for stochastic stability analysis}\label{section_lymeas}

Consider the discrete-time stochastic  system,
\begin{eqnarray}
x_{n+1}=T(x_n,\xi_n),\label{rds}
\end{eqnarray}
where $x_n\in X\subset \mathbb{R}^d$ is a  compact set.  The random vectors, $\xi_0,\xi_1,\ldots$, are assumed  independent identically distributed (i.i.d) and  takes values in $W$ with the following probability distribution,
\begin{equation}
{\rm Prob}(\xi_n\in B)=v(B),\;\;\forall n, \;\;B\subset W, \label{def:probMeasure}
\end{equation}
and is the same for all $n$ and $v$ is the probability measure.
The system mapping $T(x,\xi)$ is assumed  continuous in $x$ and for every fixed $x\in X$, it is measurable in $\xi$. The initial condition, $x_0$, and the sequence of random vectors, $\xi_0,\xi_1,\ldots$, are assumed  independent. 
The basic object of study in our proposed approach to stochastic stability is a linear transfer, the Perron-Frobenius operator,  defined as follows:
\begin{definition}[Perron-Frobenius (P-F) operator] \label{def-PF}Let ${\cal M}(X)$ be the space of finite measures on $X$.
The Perron-Frobenius operator, $\mathbb{P}: {\cal M}(X)\to {\cal M}(X)$, for stochastic dynamical system (\ref{rds}) is given by 
\begin{eqnarray}
[\mathbb{P}_T\mu](A)=\int_X \left\{\int_W \chi_A(T(x,y))dv(y)\right\}d\mu(x)
\label{pfequation}
\end{eqnarray}
 for $\mu\in {\cal M}(X)$, and $A\in {\cal B}(X)$, where ${\cal B}(X)$ is the Borel 
 $\sigma$-algebra on $X$, $T^{-1}_{\xi}(A) = T^{-1}(A,\xi)$ is the inverse image of the set $A$, 
 and $\chi_A(x)$ is an indicator function of set $A$.  
\end{definition}

\begin{assumption}\label{assume_equilibrium} We assume  $x=0$ is an equilibrium point of  system (\ref{rds}), i.e.,
$T(0,\xi_n)=0,\;\;\;\forall n,$
for any given sequence of random vectors $\{\xi_n\}$.
\end{assumption}

\begin{assumption}[Local Stability]\label{assume_local} We assume  the trivial solution, $x=0$, is locally stochastic, asymptotically stable. In particular, we assume there exists a neighborhood ${\cal O}$ of $x=0$, such that for all $x_0\in \cal O$,
\[{\rm Prob}\{T^n(x_0,\xi_0^n)\in {\cal O}\}=1,\;\;\forall n\geq 0,\]
and
\[{\rm Prob}\{\lim_{n\to \infty} T^n(x_0,\xi_0^n)=0\}=1.\]
where $\xi_0^n$ notation is used to define the sequence of random variable $\{\xi_0,\ldots,\xi_n\}$.
\end{assumption}
Assumption \ref{assume_equilibrium} is used in the decomposition of the P-F operator in section (\ref{section_decompose}) and  Assumption \ref{assume_local} is used in the proof of Theorem \ref{theorem_converse}. We will use the notation $U(\epsilon)$ to denote the $\epsilon$ neighborhood of the origin for any positive value of $\epsilon>0$. We have $0\in U(\epsilon)\subset {\cal O}$.

We introduce the following definitions for stability of the stochastic dynamical system (\ref{rds}).

\begin{definition}[a.e. stochastic stablity with geometric decay]\label{def_aeas_geometric} For any given $\epsilon>0$,  let $U(\epsilon)$ be the $\epsilon$ neighborhood of the equilibrium point, $x=0$. The equilibrium point, $x=0$, is said to be almost everywhere, almost sure stable with geometric decay with respect to finite measure,  $m\in {\cal M}(X)$,    if there exists $0<\alpha(\epsilon)<1$, $0<\beta<1$, and $K(\epsilon)<\infty$, such that
\[m \{x\in X: Prob \{T^n(x,\xi_0^n)\in B\}\geq \alpha^n\}\leq K \beta^n, \]
for all sets $B\in {\cal B}(X\setminus U(\epsilon))$, such that $m(B)>0$.
\end{definition}

We introduce the following definition of absolutely continuous 
measures.
\begin{definition}  [Absolutely continuous measure] A measure $\mu$
is absolutely continuous with respect to another measure, $\vartheta$
denoted as $\mu\prec\vartheta$, if $\mu(B) = 0$ for all $B\in{\cal B}(X)$ with $\vartheta(B) =
0$.
\end{definition}

\subsection{Decomposition of the P-F operator}\label{section_decompose}
Let $E=\{0\}$. Hence, $E^c=X\setminus E$. We write $T: E\cup E^c\times W\to X$. For any set $B\in {\cal B}(E^c)$, we write
{\small
\begin{eqnarray}[\mathbb{P}_T\mu](B)&=&\int_X \int_W \chi_B(T(x,y))dv(y)d\mu(x)\nonumber\\&=&\int_{E^c} \int_W \chi_B(T(x,y))dv(y)d\mu(x).\end{eqnarray}}
This is because  $T(x,\xi)\in B$ implies  $x\notin E$.
Since  set $E$ is invariant, we define the restriction of the P-F operator on the complement set $E^c$. Thus, we can define the restriction of the P-F operator on the measure space ${\cal M}(E^c)$ as follows:

\begin{equation}
[\mathbb{P}_T^1\mu](B)=\int_{E^c} \int_W \chi_B(T(x,y))dv(y)d\mu(x),\label{pfrest1}
\end{equation}
for any set $B\in{\cal B}(E^c)$ and $\mu\in {\cal M}(E^c)$.

 Next, the restriction $T:E\times W\rightarrow E$ can also be used
to define a P-F operator denoted by
\begin{equation}
[\mathbb{P}_T^0\mu](B)=\int_{B} \chi_B(T(x,y)) dv(y) d\mu(x),\label{pfrest2}
\end{equation}
where $\mu\in{\cal M}(E)$ and $B\subset {\cal B}(E)$.

The above considerations suggest a representation of the P-F
operator, $\mathbb{P}$, in terms of $\mathbb{P}_0$ and $\mathbb{P}_1$.
Indeed, this is the case, if one considers a splitting of the measured
space,
${\cal M}(X)={\cal M}_0\oplus{\cal M}_1$,
where ${\cal M}_0 := {\cal M}(E)$, ${\cal M}_1 := {\cal
M}(E^c)$, and $\oplus$ stands for the direct sum.

The splitting defined in
above equation implies that the P-F operator has a lower-triangular
matrix representation given by
\begin{equation}
\mathbb{P}_T=\left[ \begin{array}{cc} \mathbb{P}_T^0 & 0 \\
\times & \mathbb{P}_T^1 \end{array} \right]. \label{eq:splitP}
\end{equation}


Following  definition of Lyapunov measure is introduced for a.e. stability verification of system (\ref{rds}).
\begin{definition}[Lyapunov measure]\label{definition_Lyameas} A Lyapunov measure, $\bar \mu\in{\cal M}(X\setminus U(\epsilon))$, is defined as any positive measure  finite outside the $\epsilon$ neighborhood of equilibrium point and satisfies
\begin{eqnarray}
[\mathbb{P}_T^1 \bar \mu](B)< \gamma \bar \mu(B) \label{lya_meas}
\end{eqnarray}
for $0<\gamma\leq 1$ and for all sets $B\in {\cal B}(X\setminus U(\epsilon))$.
\end{definition}

The following theorem provides the
condition for a.e. stochastic stability with geometric decay.
\begin{theorem} \label{theorem_converse} An attractor set $\cal A$
for the system (\ref{rds}) is a.e. stochastic stable with
geometric decay (Definition \ref{def_aeas_geometric}) with respect to finite measure $m$, if and only if for all $\epsilon>0$ there exists
a non-negative measure $\bar \mu$ which is finite on
${\cal B}(X\setminus U(\epsilon))$  and satisfies
\begin{equation}
\gamma[ {\mathbb P}_T^1\bar \mu](B)-\bar \mu(B)=-m(B)\label{LME}
\end{equation} for all  sets
$B\subset X\setminus U(\epsilon)$ and for some $\gamma>1$.
\end{theorem}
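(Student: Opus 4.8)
The plan is to read the Lyapunov measure equation (\ref{LME}) as a resolvent identity $(I-\gamma\mathbb{P}_T^1)\bar\mu = m$, so that the natural candidate is the Neumann series $\bar\mu = \sum_{n=0}^{\infty}\gamma^n(\mathbb{P}_T^1)^n m$. Both directions of the equivalence then rest on controlling the iterates $(\mathbb{P}_T^1)^n m$, and the first step I would carry out is to pin down their probabilistic meaning. Using the i.i.d.\ structure of $\{\xi_n\}$ and Fubini's theorem on the product measure for the driving noise, together with the duality between $\mathbb{P}_T^1$ and its Koopman adjoint $(Uf)(x)=\int_W f(T(x,y))\,dv(y)$, I would establish by induction the identity $[(\mathbb{P}_T^1)^n m](B)=\int_{E^c}\mathrm{Prob}\{T^n(x,\xi_0^n)\in B\}\,dm(x)$ for every $B\in\mathcal{B}(X\setminus U(\epsilon))$. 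The lower-triangular decomposition (\ref{eq:splitP}) of section \ref{section_decompose} is exactly what guarantees that restricting to $E^c$ does not discard the trajectories that escape the neighborhood of the attractor.

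For the forward (``only if'') direction I would assume a.e.\ stochastic stability with geometric decay and show the series converges to a finite, non-negative measure. Writing $p_n(x,B)=\mathrm{Prob}\{T^n(x,\xi_0^n)\in B\}$ and splitting the domain of integration according to whether $p_n(x,B)\ge\alpha^n$, I would bound $[(\mathbb{P}_T^1)^n m](B)\le m\{x:p_n(x,B)\ge\alpha^n\}+\alpha^n m(X)\le K\beta^n+\alpha^n m(X)$, using $p_n\le 1$ on the first piece, the decay hypothesis of Definition \ref{def_aeas_geometric} on that same piece, and finiteness of $m$ on the second. Choosing any $\gamma$ with $1<\gamma<\min\{1/\alpha,1/\beta\}$ then makes $\sum_n\gamma^n(K\beta^n+\alpha^n m(X))<\infty$, so $\bar\mu$ is a well-defined finite non-negative measure on $\mathcal{B}(X\setminus U(\epsilon))$. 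A telescoping computation, $\gamma\mathbb{P}_T^1\bar\mu=\sum_{n\ge 1}\gamma^n(\mathbb{P}_T^1)^n m=\bar\mu-m$, shows $\bar\mu$ satisfies (\ref{LME}) exactly, with the required $\gamma>1$.

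For the converse (``if'') direction I would start from a finite non-negative $\bar\mu$ satisfying (\ref{LME}) with $\gamma>1$ and substitute the relation $\bar\mu=m+\gamma\mathbb{P}_T^1\bar\mu$ into itself $n$ times. Because $\mathbb{P}_T^1$ preserves non-negativity and every term is a non-negative measure, dropping the remainder yields $\gamma^n[(\mathbb{P}_T^1)^n m](B)\le\bar\mu(B)\le M$, where $M\defeq\bar\mu(X\setminus U(\epsilon))<\infty$. Combining this with the probabilistic identity and Markov's inequality, $\alpha^n\,m\{x:p_n(x,B)\ge\alpha^n\}\le\int_{E^c} p_n\,dm=[(\mathbb{P}_T^1)^n m](B)\le M\gamma^{-n}$, gives $m\{x:p_n(x,B)\ge\alpha^n\}\le M(\gamma\alpha)^{-n}$. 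Taking any $\alpha\in(1/\gamma,1)$ and setting $\beta=1/(\gamma\alpha)\in(0,1)$ and $K=M$ recovers Definition \ref{def_aeas_geometric}; note that the equality (\ref{LME}) with $m(B)>0$ also forces the strict Lyapunov inequality $\gamma[\mathbb{P}_T^1\bar\mu](B)<\bar\mu(B)$ of Definition \ref{definition_Lyameas}.

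The step I expect to be the main obstacle is the rigorous justification of the probabilistic representation of $(\mathbb{P}_T^1)^n m$ together with the handling of the restriction to $E^c$: one must interchange the $n$-fold integration against $v$ with the integration against $m$, verify joint measurability of $(x,\xi_0^n)\mapsto\chi_B(T^n(x,\xi_0^n))$ from the standing continuity-in-$x$ and measurability-in-$\xi$ hypotheses on $T$, and invoke Assumption \ref{assume_local} to ensure that the local behaviour near the attractor is consistent with the finiteness of $\bar\mu$ and that mass absorbed into $U(\epsilon)$ is correctly excluded by the lower-triangular structure (\ref{eq:splitP}). Once this representation is in place, the remaining estimates are routine geometric-series and Markov-inequality arguments.
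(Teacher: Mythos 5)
The paper does not actually print a proof of Theorem \ref{theorem_converse} --- it defers entirely to the deterministic argument of \cite{raghunathan2014optimal} --- and your Neumann-series construction $\bar\mu=\sum_{n\ge 0}\gamma^n(\mathbb{P}_T^1)^n m$, with the geometric-series bound in the ``only if'' direction and the iterated-substitution plus Markov-inequality bound in the ``if'' direction, is exactly that argument transplanted to the stochastic setting via the probabilistic reading of the iterates. The single delicate point, which you correctly flag rather than gloss over, is the identification of the restricted iterate $[(\mathbb{P}_T^1)^n m](B)$ with $\int \mathrm{Prob}\{T^n(x,\xi_0^n)\in B\}\,dm(x)$: this is where Assumption \ref{assume_equilibrium} (the equilibrium is absorbing, so trajectories removed by the restriction to $E^c$ can never return to $B$) and Assumption \ref{assume_local} (control of mass that enters $U(\epsilon)$ but has not yet reached the equilibrium) must be invoked, and it is precisely the content of the details the paper omits --- so your proposal is correct and follows essentially the same route.
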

\begin{proof}
We omit the proof here due to space constraints. However, the proof follows exactly along the lines of the proof for deterministic systems \cite{raghunathan2014optimal}
\end{proof}

\section{Lyapunov measure for optimal stabilization}\label{section_lymeasoptimal}

We consider the stabilization of stochastic dynamical system
\[x_{n+1}=T(x_n,u_n,\xi_n)=:T_{\xi_n}(x_n,u_n)\]
where $x_n\in X\subset \mathbb{R}^q$ is the state, $u_n \in U\subset
\mathbb{R}^d$ is the control input, and $\xi_n\in W\subset \mathbb{R}^p$ is a random variable. The sequence of random variables $\xi_0,\xi_1,\ldots$ are assumed to independent identically distributed (i.i.d.) as in~\eqref{def:probMeasure}.
For each fixed value of $\xi$  the mapping $T_{\xi}:X\times U\to X$ is assumed to be continuous in $x$ and $u$, and for every fixed values of $x$ and $u$ it is measurable in $\xi$. Both $X$ and $U$ are assumed  compact. The
objective is to design a deterministic feedback controller, $u_n=K(x_n)$, to optimally 
stabilize the attractor set $\cal A$. 

We define
the feedback control mapping $C: X\rightarrow Y:= X\times U$ as
$C(x)=(x,K(x))$.
We denote by ${\cal B}(Y)$ the Borel-$\sigma$ algebra on $Y$ and ${\cal M}(Y)$ the vector space of real valued measures on ${\cal B}(Y)$. For any $\mu \in {\cal M}(X)$, the control mapping $C$  can be used to define a measure, $\theta\in {\cal M}(Y)$, as follows:
\begin{eqnarray}
&\theta(D):=[\mathbb P_C \mu](D)=\mu (C^{-1}(D))\nonumber\\
&[\mathbb P_{C^{-1}} \theta](B):=\mu(B)=\theta (C(B))\label{ess},
\end{eqnarray}
for all sets $D\in {\cal B}(Y)$ and $B\in {\cal B}(X)$. Since $C$ is an injective function with $\theta$ satisfying (\ref{ess}),  it follows from the theorem on disintegration of measure \cite{disintegration} (Theorem 5.8)  there exists a unique disintegration $\theta_x$ of the measure $\theta$  for $\mu$ almost all $x\in X$, such that $
\int_Y f(y)d\theta(y)=\int_X\int_{C(x)} f(y)d\theta_x(y)d\mu(x)$,
for any Borel-measurable function $f:Y\to \mathbb R$. In particular, for $f(y)=\chi_D(y)$, the indicator function for the set $D$, we obtain
$\theta(D)=\int_X \int_{C(x)}\chi_D(y) d\theta_x(y) d\mu(x)=[\mathbb{P}_C\mu](D).$
Using the definition of the feedback controller
mapping $C$, we write 
\[x_{n+1}=T(x_n, K(x_n),\xi_n)=T( C (x_n),\xi_n)=:T_{\xi_n}\circ C(x_n).\]
The system mapping $T: Y\times W \rightarrow X$ can be associated with P-F operators ${\mathbb
P}_{T}: {\cal M}(Y)\rightarrow {\cal M}(X)$ as \[
[{\mathbb P}_{T} \theta](B)= \int_{Y} \left\{\int_W  \chi_B(T(y,\xi)) d v(\xi) \right\}d\theta(y).\]

For the feedback control system $T_\xi \circ C: X\times W \rightarrow X$, the P-F operator 
can be written as a product of ${\mathbb P}_{T_\xi}$ and ${\mathbb P}_C$. In particular, we obtain 
\[
[{\mathbb P}_{T_\xi \circ C} \mu](B)=\int_Y \left\{\int_W\chi_B(T(y,\xi)) dv(\xi) \right\}d [{\mathbb P}_C \mu](y) 
\]
\[
[{\mathbb P}_{T} {\mathbb P}_C \mu](B)=\int_X \int_{C(x)}\left\{\int_W \chi_{B}(T(y,\xi))dv(\xi)\right\}d\theta_x(y)d\mu(x).
\]

The P-F operators, $\mathbb{P}_T$ and $\mathbb{P_C}$, are used to define their restriction, $\mathbb{P}_T^1:{\cal M}({\cal A}^c\times U)\to {\cal M}({\cal A}^c)$, and $\mathbb{P}_C^1:{\cal M}({\cal A}^c)\to {\cal M}({\cal A}^c\times U)$ to the complement of the attractor set, respectively,  similar to  Eqs. (\ref{pfrest1})-(\ref{pfrest2}).

\begin{assumption}
We assume  there exists a feedback controller mapping $C_0(x)=(x,K_0(x))$, which locally stabilizes the
invariant set ${\cal A}$, i.e., there exists a neighborhood $V$ of $\cal A$
such that $T\circ C_0(V)\subset V$ and $x_n\rightarrow \cal A$ for all
$x_0\in V$; moreover ${\cal A} \subset U(\epsilon)\subset V$.
\end{assumption}

Our objective is to construct the optimal stabilizing controller
for almost every initial condition starting from $X_1$. Let
$C_{1}: X_1\rightarrow Y$ be the stabilizing control map for $X_1 (:=X\setminus U(\epsilon))$.
The control mapping $C: X\to X\times U$ can be written as follows:
\begin{equation}C(x)=\left \{\begin{array}{ccl}C_0(x)=(x,K_0(x))&
{\rm for}&x\in U(\epsilon)\\
C_1(x)=(x,K_1(x))&{\rm for}&x\in X_1.
\end{array}\right.
\end{equation}

Furthermore, we assume the feedback control system $T_\xi \circ C:
X\rightarrow X$ is non-singular with respect to the Lebesgue measure,
$m$ for fixed value of $\xi$. We seek to design the controller mapping, $C(x)=(x,K(x))$, such that the attractor set $\cal A$ is a.e. stable with geometric decay rate $\beta<1$, while minimizing the  cost function,
\begin{equation} {\cal
C}_C(B)=\int_{B}\sum_{n=0}^{\infty} \gamma^n E_{\xi_0^n}[G(C(x_n),\xi_n)]
dm(x),\;\;\;\;\label{cost_sum}
\end{equation}
where $x_0=x$, the cost function $G: X\times U\times  W \to {\mathbb R}$ is assumed a continuous non-negative real-valued function for each fixed value of $\xi$ and is assumed to be measurable w.r.t. $\xi$ for fixed values of $x$ and $u$. Furthermore, $G({\cal A},0, \xi)=0$ for all $\xi$, $x_{n+1}=T_\xi \circ C(x_n)$, and $0<\gamma<\frac{1}{\beta}$. The expectation $E_{\xi_0^n}$ in (\ref{cost_sum}) denotes expectation over the sequence of random variable $\{\xi_0,\xi_1,\ldots, \xi_n\}$.
  Note, that in the cost function (\ref{cost_sum}), $\gamma$ is allowed  to be greater than one and this is one of the main departures from the conventional optimal control problem, where $\gamma\leq 1$. Under the assumption that the controller mapping $C$ renders the attractor set a.e. stable with a geometric decay rate, $\beta<\frac{1}{\gamma}$, the cost function (\ref{cost_sum}) is finite.

 \begin{remark}
We will use the notion of the scalar product between continuous function $h\in {\cal C}^0(X)$ and measure $\mu\in {\cal M}(X)$ as $\left<h,d\mu\right>_X :=\int_X h(x)d\mu(x)$ \cite{Lasota}.
 \end{remark}

Let the controller mapping, $C(x)=(x,K(x))$, be such that the attractor set $\cal A$ for the feedback control system $T_\xi \circ C:X\to X$ is a.e. stable with geometric decay rate $\beta<1$. Then, the cost function (\ref{cost_sum}) is well defined for $\gamma<\frac{1}{\beta}$ and, furthermore, the cost of stabilization of the attractor set $\cal A$ with respect to Lebesgue almost every initial condition starting from the set $B\in {\cal B}(X_1)$ can be expressed as follows:
\begin{eqnarray}
&{\cal C}_C(B)= \int_{B}\sum_{n=0}^{\infty}\gamma^n E_{\xi_0^n}[G( C(x_n),\xi_n)]
dm(x)\nonumber\\&=\int_{{\cal Z}}G(y,\xi)d[\mathbb{P}_C^1 \bar \mu_B](y)dv(\xi)=
\left<G,d{\mathbb P}_C^1 \bar \mu_B dv \right>_{{\cal Z}},\label{inequality}
\end{eqnarray}
where, ${\cal Z}={\cal A}^c\times U\times W$, $x_0=x$ and $\bar \mu_B$ is the solution of the following  control
Lyapunov measure equation,
\begin{eqnarray}
\gamma [\mathbb{P}_{T}^1\cdot \mathbb{P}_C^1\bar \mu_B](D)-\bar
\mu_B(D)=-m_B(D), \label{control_Ly}
\end{eqnarray}
for all $D\in {\cal B}(X_1)$ and  where $m_B(\cdot):=m(B\cap \cdot)$ is a finite measure
supported on the set $B\in {\cal B}(X_1)$.


The minimum cost of stabilization is defined as the minimum over
all a.e. stabilizing controller mappings, $C$, with a geometric decay as follows:
\begin{equation}
{\cal C}^{*}(B)=\min_{C}{\cal C}_C(B).
\end{equation}
Next, we write the infinite dimensional linear program for the optimal stabilization of the attractor
set $\cal A$. Towards this goal, we first define the projection map,
$P_1: {\cal A}^c\times U\rightarrow {\cal A}^c$
as:
$P_1(x,u)=x,$
and denote the P-F operator corresponding to $P_1$ as
$\mathbb{P}_{P_1}:  {\cal M}({\cal A}^c\times U)\rightarrow
{\cal M}({\cal A}^c)$, which can be written as
$[{\mathbb P}^1_{P_1} \theta](D)=\int_{{\cal A}^c\times U}\chi_D(P_1(y))d\theta(y)=\int_{D\times U}d\theta(y)=\mu(D)$.
 Using this definition of projection mapping, $P_1$, and the
corresponding P-F operator, we can write the linear program for the optimal stabilization of set $B$
with  unknown variable
$\theta$ as follows:
\begin{eqnarray}
\min\limits_{\theta\geq 0} && \left<G, d\theta dv\right>_{{\cal A}^c \times U\times W}\nonumber\\
\mbox{s.t. } && \gamma [{\mathbb P}^1_T \theta](D)-[{\mathbb P}^1_{P_1}
\theta](D)=-m_B(D)\label{linear_program},
\end{eqnarray}
for $D\in {\cal B}(X_1)$.
\begin{remark}\label{remark_discount}
Observe  the geometric decay parameter satisfies $\gamma > 1$.
This is in contrast to most optimization problems studied in
the context of Markov-controlled processes, such as in Lasserre and
Hern\'{a}ndez-Lerma \cite{LassHernBook}. Average cost and
discounted cost optimality problems are considered in
\cite{LassHernBook, viscosity_solnHJB_book}. The additional flexibility provided by $\gamma>1$ guarantees the controller obtained from the finite dimensional approximation of the infinite dimensional program (\ref{linear_program}) also stabilizes the attractor set for control dynamical system. 
\end{remark}


\section{Computational approach}\label{section_computation}
We discretize the state-space and
control space  for the purposes of computations as described below.
Borrowing the notation from \cite{Vaidya_CLM}, let
${\cal X}_N := \{D_1,...,D_i,...,D_N\}$ denote a finite
partition of the state-space $X \subset \R^q$.
The measure space associated with ${\cal X}_N$ is $\R^N$.
We assume without loss of generality that the
attractor set, ${\cal A}$, is contained in $D_{N}$, that is,
${\cal A} \subseteq D_N$. The control space, $U$, is quantized and the control input
is assumed to take only finitely many control values from the
quantized set,
${\cal U}_M = \{u^1,\hdots,u^a,\hdots,u^M\}$,
where $u^a \in \R^d$.  The partition, ${\cal U}_M$, is identified with the vector
space, $\R^{d \times M}$. Similarly, the space of uncertainty, $W$, and the probability measure $v$ is quantized and are assumed to take  only finitely many values ${\cal W}=\{\xi^1,\ldots, \xi^\ell,\ldots, \xi^L\}$, and $\vartheta=\{v^1,\ldots,v^\ell,\ldots, v^L\}$ where $\xi^\ell\in \mathbb{R}^p$ and $0\leq v^\ell\leq 1$ for all $\ell$ and $\sum_{\ell=1}^L v^\ell=1$. The discrete probability measure on the finite dimensional uncertainty space is assigned as follows:
\[Prob\{\xi_n=\xi^\ell\}=v^\ell,\;\;\;\forall n,\;\;\;\ell=1,\ldots,L.\]
The space of uncertainty is identified with finite dimensional space $\R^{p\times L}$. 
 The system map that results from choosing the controls $u=u^a$ and uncertainty value $\xi=\xi^\ell$ is denoted by
$T_{u^a,\xi^\ell}$ and the corresponding P-F operator is denoted as
$P_{T_{u^a,\xi^\ell}} \in \R^{N \times N}$. Note that for system mapping $T_{u^a, \xi^\ell}$, the control on all sets of
the partition is $u(D_i) = u^a$, for all $D_i \in
{\cal X}_N$. For brevity of notation, we will denote the P-F matrix $P_{T_{u^a,\xi^\ell}} $ by $P_{T_{a,\ell}} $ and its entries are calculated as

\[
\sub{P_{T_{a,\ell}}}{ij} := \frac{m({T^{-1}_{u^a,\xi^\ell}}(D_j)\cap D_i)}{m(D_i)},
\] where $m$ is the Lebesgue measure and $\sub{P_{T_{a,\ell}}}{ij}$ denotes the
$(i,j)$-th entry of the matrix.
Since $T_{u^a, \xi^\ell}:X \rightarrow X$, we have  $P_{T_{a,\ell}}$ is a Markov matrix.
Additionally, $\oP_{T_{a,\ell}} : \R^{N-1} \rightarrow \R^{N-1}$
will denote the finite dimensional counterpart of the
P-F operator restricted to ${\cal X}_N \setminus D_N$,
the complement of the attractor set.
It is easily seen that $\oP_{T_{a,\ell}}$ consists of the first
$(N-1)$ rows and columns of $P_{T_{a,\ell}}$.


With the above quantization of the control space and partition of the
state space, the determination of the control $u(x) \in U$ (or
equivalently $K(x)$) for all $x \in {\cal A}^c$  has now been cast as a problem
of choosing $u_N(D_i) \in {\cal U}_M$ for all sets $D_i
\subset {\cal X}_N$.  The finite dimensional approximation of the
optimal stabilization problem \eqref{linear_program} is equivalent to
solving the following finite-dimensional LP:
\begin{eqnarray}
 \min\limits_{\theta^a,\mu \geq 0} &&
 \sum_{a=1}^M\left[ \sum_{\ell=1}^L v^\ell (G^{a,\ell})^{'} \right]\theta^a\nonumber\\
 \mbox{s.t.} && \gamma  \sum_{a=1}^{M} \left[\sum_{\ell=1}^L v^\ell (P_{T_{a,\ell}})^{'}\right]\theta^a
 - \sum_{a=1}^M\theta^a = -m
\label{lyaMeasLP1},
\end{eqnarray}
where we have used the notation $(\cdot)^{'}$ for the transpose operation,
$m \in \R^{N-1}$ and $\sub{m}{j} > 0$ denote the support of Lebesgue measure,
$m$, on the set $D_j$, $G^{a,\ell} \in \R^{N-1}$
is the cost defined on ${\cal X}_N\setminus D_N$ with
$\sub{G^{a,\ell}}{j}$ the cost associated with using control action $u^a$
on set $D_j$ with uncertainty value $\xi=\xi^\ell$; $\theta^a \in \R^{N-1}$ are, respectively, the
discrete counter-parts of infinite-dimensional measure quantities in
\eqref{linear_program}. We define following quantities 
\[G^a:=\sum_{\ell=1}^L v^\ell G^{a,\ell},\;\;\;P_{T_a}:=\sum_{\ell=1}^L v^\ell P_{T_{a,\ell}}\]
to rewrite finite-dimensional LP (\ref{lyaMeasLP1}) as follows:
\begin{equation}
 \min\limits_{\theta^a,\mu \geq 0} \mbox{ }
 \sum_{a=1}^M (G^{a})^{'} \theta^a,\;\;\;
 \mbox{s.t. } \gamma  \sum_{a=1}^{M} (P_{T_{a}})^{'}\theta^a
 - \sum_{a=1}^M\theta^a = -m
\label{lyaMeasLP},
\end{equation}

In the LP \eqref{lyaMeasLP}, we have not enforced the constraint,
\begin{equation}
\sub{\theta^a}{j} > 0 \mbox{ for exactly one } a \in \{1,...,M\},
\label{detControl}
\end{equation}
for each $j = 1,...,(N-1)$.  The above constraint ensures  the
control on each set in unique.  We prove in the following  the
uniqueness can be ensured without  enforcing the constraint,
provided the LP \eqref{lyaMeasLP} has a solution.
To this end, we introduce the dual LP
associated with the LP in \eqref{lyaMeasLP}.
The dual to the LP in \eqref{lyaMeasLP} is,
\vspace{-0.1in}
\begin{equation}
\max\limits_{V} \mbox{ } m^{'}V ,\;\;
\mbox{s.t. } V \leq \gamma\oP_{T_a}V + G^a \mbox{ } \forall
a = 1,...,M.
\label{lyaLP}
\end{equation}
In the above LP \eqref{lyaLP}, $V$ is the dual variable to the
equality constraints in \eqref{lyaMeasLP}.
\vspace{-0.1in}
\subsection{Existence of solutions to the finite LP}

We make the following assumption throughout this section.
\begin{assumption} \label{assumption_A}
There exists $\theta^{a} \in \R^{N-1} \;\forall\; a = 1,\ldots,M$,
such that the LP in \eqref{lyaMeasLP} is feasible for
some $\gamma > 1$.
\end{assumption}

\begin{lemma}\label{strongDuality}
Consider a partition ${\cal X}_N = \{D_1,\ldots,D_N\}$ of the state-space
$X$ with attractor set ${\cal A} \subseteq D_N$ and a quantization
 ${\cal U}_M = \{u^1,\ldots,u^M\}$ of the control space $U$.  Suppose
 Assumption \ref{assumption_A} holds for some $\gamma > 1$ and for $m, G > 0$.
Then, there exists an
optimal solution, $\theta$, to the LP \eqref{lyaMeasLP} and
an optimal solution, $V$, to the dual LP \eqref{lyaLP}
with equal objective values, ($\sum\limits_{a=1}^M(G^a)^{'}\theta^a
= m'V$) and $\theta, V$ bounded.
\end{lemma}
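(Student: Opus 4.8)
The plan is to recognize \eqref{lyaMeasLP} and \eqref{lyaLP} as a matched primal--dual pair of finite-dimensional linear programs and to reduce everything to the strong duality theorem of linear programming, so the real work is only to verify feasibility of both programs and to pin their objectives together via weak duality. First I would put the primal in standard form by multiplying the equality constraint in \eqref{lyaMeasLP} by $-1$, rewriting it as $\sum_{a=1}^M (I-\gamma \oP_{T_a}')\theta^a = m$ with $\theta^a\geq 0$ and cost vectors $G^a$ (here the $P_{T_a}$ in \eqref{lyaMeasLP} is read as its $(N-1)\times(N-1)$ restriction $\oP_{T_a}$, consistent with the dimensions $\theta^a,m\in\R^{N-1}$ and with the dual). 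The stated dual \eqref{lyaLP}, namely $V\leq \gamma\oP_{T_a}V+G^a$ for all $a$, is then exactly $(I-\gamma\oP_{T_a})V\leq G^a$, which is the textbook dual of this program, with $V$ the multiplier for the equality constraint. Primal feasibility is granted outright by Assumption \ref{assumption_A} for the given $\gamma>1$.

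Next I would exhibit an explicit dual feasible point. Because the cost satisfies $G^a>0$ componentwise (as $G>0$ is assumed), the choice $V=0$ gives $0\leq G^a$ for every $a$, so the dual \eqref{lyaLP} is feasible. This is the key structural observation, and it is precisely what lets the argument go through in the nonstandard regime $\gamma>1$: since $\gamma>1$, the substochasticity of $\oP_{T_a}$ no longer guarantees that $I-\gamma\oP_{T_a}'$ is an M-matrix, so I can neither argue that the primal feasible polyhedron is bounded nor use a convergent Neumann series $\sum_k \gamma^k\oP_{T_a}^k$ to invert it, as one would for $\gamma\leq 1$. Chasing compactness of the feasible set is therefore the wrong approach; the right handle is the pair of explicit feasible points together with weak duality.

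With both programs feasible, I would establish weak duality by the one-line computation $m'V=\big(\sum_a (I-\gamma\oP_{T_a}')\theta^a\big)'V=\sum_a (\theta^a)'(I-\gamma\oP_{T_a})V\leq \sum_a (\theta^a)'G^a=\sum_a (G^a)'\theta^a$, valid for every primal feasible $\theta$ and dual feasible $V$, using $\theta^a\geq 0$ componentwise against the dual inequality. Evaluating the left side at $V=0$ shows the primal objective is bounded below (by $0$), and fixing any primal feasible $\theta$ shows the dual objective is bounded above, so neither optimal value is infinite. By the strong duality theorem for linear programs, a feasible LP with finite optimal value attains that value at a basic feasible point; hence finite optimizers $\theta$ and $V$ exist, and their objective values agree, $\sum_{a=1}^M (G^a)'\theta^a=m'V$. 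The finiteness of these attained basic optimizers is exactly the asserted boundedness. The only delicate point—and the one I would flag as the main obstacle—is the boundedness/attainment claim under $\gamma>1$, which I resolve not through compactness of the constraint set but through the explicit dual feasible point $V=0$ and weak duality forcing both optimal values to be finite.
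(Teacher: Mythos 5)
Your proposal is correct and is essentially the same argument the paper relies on (the cited Lemma 12 of \cite{raghunathan2014optimal} proceeds by exactly this finite-dimensional LP duality: primal feasibility from Assumption \ref{assumption_A}, dual feasibility at $V=0$ thanks to $G>0$, weak duality to bound both values, and the strong duality/attainment theorem to conclude). Your identification of the correct primal--dual pairing with the restricted operators $\oP_{T_a}$, and your observation that one cannot invoke a Neumann series or compactness when $\gamma>1$, are both accurate and consistent with the paper's treatment.
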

\begin{proof}
Refer to proof of Lemma 12 in~\cite{raghunathan2014optimal}.
\end{proof}


The next result shows the LP \eqref{lyaMeasLP} always admits an optimal solution
satisfying \eqref{detControl}.

\begin{lemma} \label{compLemma1}
Given a partition ${\cal X}_N = \{D_1,\ldots,D_N\}$ of the state-space,
$X$, with attractor set, ${\cal A} \subseteq D_N$, and a quantization,
${\cal U}_M = \{u^1,\ldots,u^M\}$, of the control space, $U$.  Suppose
Assumption \ref{assumption_A} holds for some $\gamma > 1$ and for $m, G > 0$.  Then, there
exists a solution $\theta \in \R^{N-1}$ solving \eqref{lyaMeasLP}
and $V \in \R^{N-1}$ solving \eqref{lyaLP} for any
$\gamma \in [1,\overline{\gamma}_N)$.  Further, the following hold at
the solution: 1) For each $j = 1,...,(N-1)$, there exists at least one $a_j \in 1,...,M$,
  such that $\sub{V}{j} = \gamma\sub{\oP_{T_{a_j}}V}{j} + \sub{G^{a_j}}{j}$
  and $\sub{\theta^{a_j}}{j} >0$. 2) There exists a $\tilde{\theta}$ that solves \eqref{lyaMeasLP},  such that for each $j = 1,...,(N-1)$, there is exactly one
$a_j \in 1,...,M$, such that $\sub{\tilde{\theta}^{a_j}}{j} > 0$ and
$\sub{\tilde{\theta}^{a'}}{j} = 0$ for $a' \neq a_j$.
\end{lemma}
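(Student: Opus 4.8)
The plan is to derive the existence clause from strong duality and then settle parts 1) and 2) using complementary slackness together with two elementary but decisive facts: a per-coordinate positivity of the primal measure forced by $m>0$, and a counting argument for vertices of the optimal face. First I would invoke Lemma \ref{strongDuality}: under Assumption \ref{assumption_A} with $m,G>0$, for every $\gamma$ in the admissible range $[1,\overline{\gamma}_N)$ both \eqref{lyaMeasLP} and its dual \eqref{lyaLP} possess optimal solutions $\theta$ and $V$ with equal objective values and with $\theta,V$ bounded. This immediately yields the existence assertion, and, more importantly, it supplies a primal--dual optimal pair on which complementary slackness is available.

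For part 1), I would first show that $\sum_{a=1}^M \sub{\theta^a}{j} \geq \sub{m}{j} > 0$ for every $j$. Writing out the $j$-th equality constraint of \eqref{lyaMeasLP} and isolating the diagonal term gives $\sum_{a=1}^M \sub{\theta^a}{j} = \sub{m}{j} + \gamma \sum_{a=1}^M \sum_{i=1}^{N-1} \sub{\oP_{T_a}}{ij}\,\sub{\theta^a}{i}$; since $\gamma>0$, the restricted P-F matrices $\oP_{T_a}$ are entrywise nonnegative (being submatrices of Markov matrices), and $\theta^a \geq 0$, every term on the right is nonnegative while the isolated $\sub{m}{j}$ is strictly positive. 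Hence for each $j$ there is at least one index $a_j$ with $\sub{\theta^{a_j}}{j}>0$. Complementary slackness for the primal--dual pair then forces the associated dual inequality in \eqref{lyaLP} to be active at this coordinate, i.e. $\sub{V}{j} = \gamma \sub{\oP_{T_{a_j}}V}{j} + \sub{G^{a_j}}{j}$, which is exactly the claim.

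For part 2) the plan is a purification argument. Because the optimal set of \eqref{lyaMeasLP} is nonempty and bounded (again by Lemma \ref{strongDuality}), it is a bounded polyhedron and therefore has a vertex; I take $\tilde{\theta}$ to be such a vertex, i.e. a basic optimal solution. Since \eqref{lyaMeasLP} has $N-1$ equality constraints, a basic solution has at most $N-1$ strictly positive components among the $M(N-1)$ variables $\sub{\tilde{\theta}^a}{j}$. On the other hand $\tilde{\theta}$ is feasible, so the positivity estimate from part 1) applies and yields at least one positive component in each of the $N-1$ disjoint coordinate blocks indexed by $j$. These two bounds pin the number of positive components to exactly $N-1$, one per block, so for each $j$ there is exactly one $a_j$ with $\sub{\tilde{\theta}^{a_j}}{j}>0$ and $\sub{\tilde{\theta}^{a'}}{j}=0$ for $a'\neq a_j$, which is precisely constraint \eqref{detControl}.

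The main obstacle I anticipate lies not in parts 1)--2), which are essentially forced once a bounded primal--dual optimal pair is in hand, but in justifying the existence clause uniformly over the whole interval $[1,\overline{\gamma}_N)$ from an assumption posited only for \emph{some} $\gamma>1$: this requires controlling how feasibility and boundedness of \eqref{lyaMeasLP} persist as $\gamma$ varies, which is governed by the invertibility of $I-\gamma \oP_{T_a}$ and hence by the spectral-radius threshold defining $\overline{\gamma}_N$. I would address this by appealing to the corresponding perturbation analysis already established for the deterministic problem in \cite{raghunathan2014optimal}, since the only structural change here is that each $\oP_{T_a}$ is now the averaged operator $\sum_{\ell} v^\ell \oP_{T_{a,\ell}}$, which remains a nonnegative substochastic matrix and leaves every step of the argument intact.
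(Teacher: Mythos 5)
Your proposal is correct and follows essentially the same route as the paper's (the paper itself gives no argument here, deferring entirely to Lemma 14 of \cite{raghunathan2014optimal}, whose proof is exactly the chain you reconstruct: strong duality from Lemma \ref{strongDuality}, the per-coordinate bound $\sum_a\sub{\theta^a}{j}\geq \sub{m}{j}>0$ forced by nonnegativity of $\oP_{T_a}$ and $\theta$, complementary slackness for part 1, and a basic-feasible-solution counting argument for part 2). Your closing caveat about extending feasibility from ``some $\gamma>1$'' to all of $[1,\overline{\gamma}_N)$ is well taken --- $\overline{\gamma}_N$ is in fact never defined in this paper and is inherited from the deterministic reference --- and deferring that spectral-radius perturbation argument to \cite{raghunathan2014optimal} is consistent with what the authors themselves do.
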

\begin{proof}
Refer to proof  of Lemma 14 in~\cite{raghunathan2014optimal}.
\end{proof}

The following theorem states the main result.

\begin{theorem}\label{thmComp}
Consider a partition ${\cal X}_N = \{D_1,\ldots,D_N\}$ of the state-space,
$X$, with attractor set, ${\cal A} \subseteq D_N$, and a quantization,
${\cal U}_M = \{u^1,\ldots,u^M\}$, of the control space, $U$.
Suppose Assumption \ref{assumption_A} holds for some $\gamma > 1$ and for
$m, G > 0$. Then, the following statements hold: 1) there exists a bounded $\theta$, a solution to
  \eqref{lyaMeasLP} and a bounded $V$, a solution
  to \eqref{lyaLP}; 2) the optimal control for each set, $j= 1,...,(N-1)$, is given by
$ u(D_j) = u^{a(j)},\;\;\mbox{where } a(j) := \min\{a | \sub{\theta^a}{j} > 0\}$; 3) $\mu$ satisfying
$\gamma(\oP_{T_u})^{'}\mu - \mu = -m\;\;, \mbox{ where }
\sub{\oP_{T_u}}{ji} = \sub{\oP_{T_{a(j)}}}{ji}$
is the Lyapunov measure for the controlled system.
\end{theorem}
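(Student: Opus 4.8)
The plan is to assemble the three claims from the two preceding lemmas, which already carry the analytical weight, and then verify by a direct index computation that the stacked primal solution solves the finite-dimensional control Lyapunov measure equation. Claim~1) is immediate: under Assumption~\ref{assumption_A}, Lemma~\ref{strongDuality} guarantees a bounded optimal primal solution $\theta$ to \eqref{lyaMeasLP} and a bounded optimal dual solution $V$ to \eqref{lyaLP} with matching objective values, so nothing further is needed here.

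For claim~2), I would invoke part~2) of Lemma~\ref{compLemma1}, which produces an optimal solution $\tilde{\theta}$ that is \emph{deterministic} in the sense that for each $j=1,\ldots,(N-1)$ exactly one index $a$ has $\sub{\tilde{\theta}^{a}}{j}>0$. Working with this $\tilde{\theta}$, the selector $a(j)=\min\{a\mid \sub{\theta^a}{j}>0\}$ returns precisely that unique index, so the feedback $u(D_j)=u^{a(j)}$ is well defined on every set. I would also note that part~1) of Lemma~\ref{compLemma1}, together with $m,G>0$, guarantees that for each $j$ there is at least one $a$ with $\sub{\theta^a}{j}>0$, so $a(j)$ exists for all $j$ and the corresponding component is strictly positive.

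For claim~3), I would define the candidate measure by stacking, $\sub{\mu}{j}:=\sub{\tilde{\theta}^{a(j)}}{j}$, and the controlled restricted P-F matrix $\oP_{T_u}$ by $\sub{\oP_{T_u}}{ji}=\sub{\oP_{T_{a(j)}}}{ji}$. The core step is to substitute $\tilde{\theta}$ into the equality constraint of \eqref{lyaMeasLP}. Reading the $j$-th component, $\gamma\sum_a\sum_i \sub{\oP_{T_a}}{ij}\sub{\theta^a}{i}-\sum_a\sub{\theta^a}{j}=-\sub{m}{j}$, and using that $\sub{\tilde{\theta}^a}{i}$ vanishes unless $a=a(i)$, each inner sum over $a$ collapses to the single surviving index. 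This turns the first term into $\sum_i \sub{\oP_{T_{a(i)}}}{ij}\sub{\mu}{i}=((\oP_{T_u})'\mu)_j$ and the second into $\sub{\mu}{j}$, yielding exactly $\gamma(\oP_{T_u})'\mu-\mu=-m$. Since $\theta\ge 0$ forces $\mu\ge 0$, this $\mu$ is a non-negative solution of the finite-dimensional counterpart of the control Lyapunov measure equation~\eqref{control_Ly}, hence a Lyapunov measure in the sense of Definition~\ref{definition_Lyameas}, and by Theorem~\ref{theorem_converse} it certifies a.e. stochastic stability with geometric decay of the closed-loop system under $u(D_j)=u^{a(j)}$.

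The main obstacle I anticipate is not conceptual but the careful index bookkeeping in claim~3): keeping the transpose, the restriction to the first $(N-1)$ indices, and the row-dependent control selection $a(j)$ mutually consistent, so that the collapsed double sum reproduces the controlled matrix $\oP_{T_u}$ rather than a mismatched one. A secondary point to handle cleanly is confirming that $a(j)$ is defined for every $j$ (equivalently that $\sub{\mu}{j}>0$), which is exactly where the hypotheses $m,G>0$ and part~1) of Lemma~\ref{compLemma1} enter.
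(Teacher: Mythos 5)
Your proposal is correct and follows the intended route: the paper itself gives no argument beyond citing the deterministic analogue (Lemma 15 of \cite{raghunathan2014optimal}), and your assembly of the three claims from Lemma \ref{strongDuality} and Lemma \ref{compLemma1}, together with the index-collapsing verification that the stacked deterministic solution $\tilde{\theta}$ yields $\gamma(\oP_{T_u})'\mu-\mu=-m$, is exactly how that proof goes. One caveat: your closing appeal to Theorem \ref{theorem_converse} to conclude a.e. stochastic stability of the actual closed-loop nonlinear system overreaches, since the finite-dimensional equation only certifies the Lyapunov measure for the \emph{discretized} controlled system (bridging to the true system is precisely the role of $\gamma>1$ and is not claimed by the theorem), but this does not affect the proof of the statement as written.
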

\begin{proof}
Refer to proof of Lemma 15 in~\cite{raghunathan2014optimal}.
\end{proof}

\section{Example: Inverted Pendulum on cart}\label{section_examples}

\begin{eqnarray}
\ddot{x} &=&  \cfrac{a\sin{(x)}-0.5m_{r}\dot x^2\sin{(2x)}-b\cos{(x)}u}{1.33-m_r\cos^2{(x)}} \nonumber\\
&-&2\zeta\sqrt{a}\dot x 
\label{system_dynamics}
\end{eqnarray}
where $g = 9.8, l = 0.5, m = 2, M = 8, \zeta = 0, m_r = \cfrac{m}{(m+M)}, a = \cfrac{g}{l}, b = \cfrac{m_r}{ml}$. The cost function is assumed to be $G(x,u)=x^2+\dot x^2+u^2$.   
 For uncontrolled system, $u = 0$, 
 \begin{wrapfigure}[12]{l}{4.1 cm}
\begin{center}
{{\includegraphics[height=1.2 in]{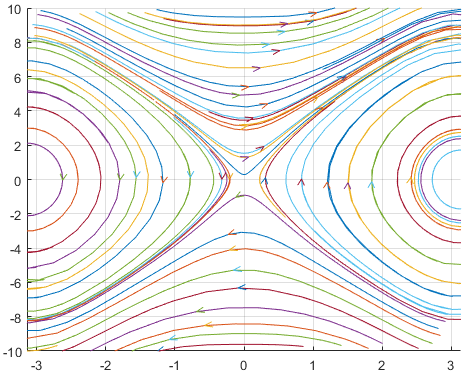}}}
\caption{Phase portrait of inverted pendulum on a cart}
\label{inv_pend_phsprt}
\end{center}
\end{wrapfigure}
 there are two equilibrium points, one equilibrium point at $(\pi,0)$ is stable in Lyapunov sense with eigenvalues of linearization on the $j\omega$ axis, the second equilibrium point at the origin is a saddle and unstable. In Fig. \ref{inv_pend_phsprt} we show the phase portrait for the uncontrolled system. The objective is to optimally stabilize the saddle equilibrium point at the origin. For the purpose of discretization we use $\delta t = 0.1$ as time discretization for the simulations. The state space $X$ is chosen to be limited in $[-\pi,\pi]\times [-10,10]$ and is partitioned into $70\times70=4900$ boxes. For constructing the P-F matrix $10$ initial conditions are located in each box. The control set is discretized as follows ${\cal U}=\{-80,-70\ldots,-10,0 ,10,\ldots, 70,80\}$. 
\begin{itemize}
 \item {\bf Case 1}: 
The damping parameter $\zeta$ is assumed to be random and uniformly distributed with mean zero and uniformly supported on the interval $[-\sigma,\sigma]$. 
Similarly, the range of random parameter $[-\sigma,\sigma]$ is divided into 10 uniformly spaced discrete values for random parameter $\xi$.
\item {\bf Case 2}: The parameter $b$ multiplying the control input is assumed to be Bernoulli random variable with statistics $Prob(b=1)=p$ and $Prob(b=0)=1-p$ for every time. 
\end{itemize}
In Fig. \ref{case1_fig1}, we show the plot for the Lyapunov measure, optimal control for $\sigma=0.1$.  In Fig. \ref{case1_fig2}, we show the plot for the optimal cost and the percentage of initial condition that are attracted to the origin. 
It is interesting to notice that the optimal cost along the stable manifold of the uncontrolled system is small whereas along the unstable manifold is large. This is because of the fact the optimal control is design to exploit the natural dynamics of the system since there is non-zero cost on the control efforts. The simulation result in Fig.\ref{case1_fig2}b are obtained by performing time domain simulation with eight initial conditions in each box iterated over $100$ time step. We notice that close to $100$ percentage of initial conditions are attracted to the origin.  In Fig. \ref{case1_fig3} we show the comparison of sample trajectory for the open loop and closed loop system. The sample trajectory shows that feedback controller is able to stabilize the origin. 

{\bf Case 2}: For case 2, we consider Bernoulli uncertainty for the input channel. In this case the random parameter $b$  can take only two values at every time instant i.e., $b=0$ or $b=1$. For this case we only show the plots for the percentage of initial conditions that can be optimally stabilized to the origin for two different values of erasure probabilities $1-p=0.15$ and $1-p=0.5$. Again the simulation results for this case are obtained by  performing time domain simulation with eight initial conditions in each box iterated over $100$ time step. From Fig. \ref{case2_fig1}, we notice that with erasure probability of $1-p=0.15$ more than $97\%$ of initial conditions are attracted to the origin. While for $1-p=0.5$ only  $66\%$  of points are attracted to the origin thereby indicating that the origin is not stabilized with erasure probability of $0.5$.

\begin{figure}
\begin{center}
\includegraphics[scale = 0.23]{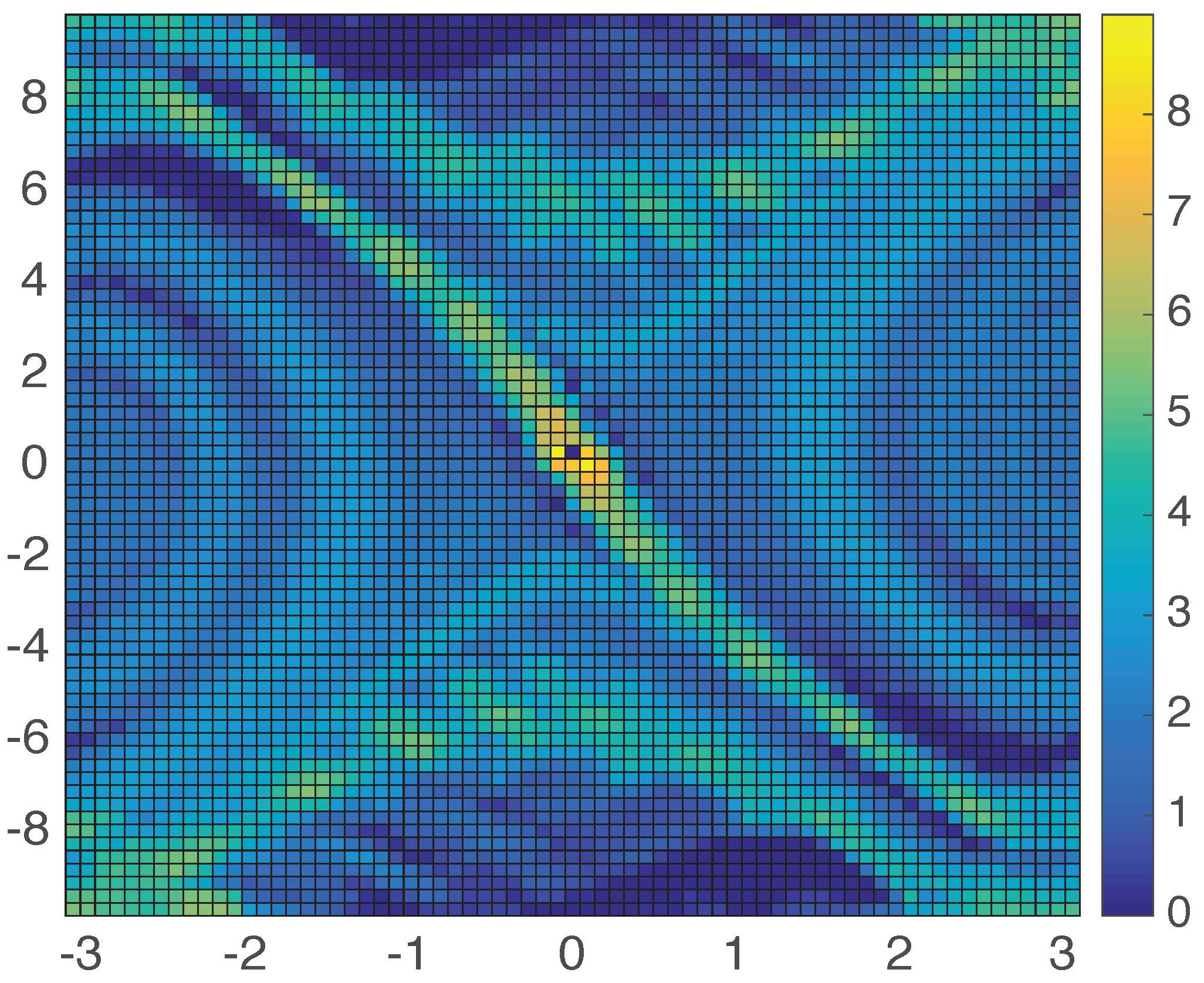}
\includegraphics[scale = 0.23]{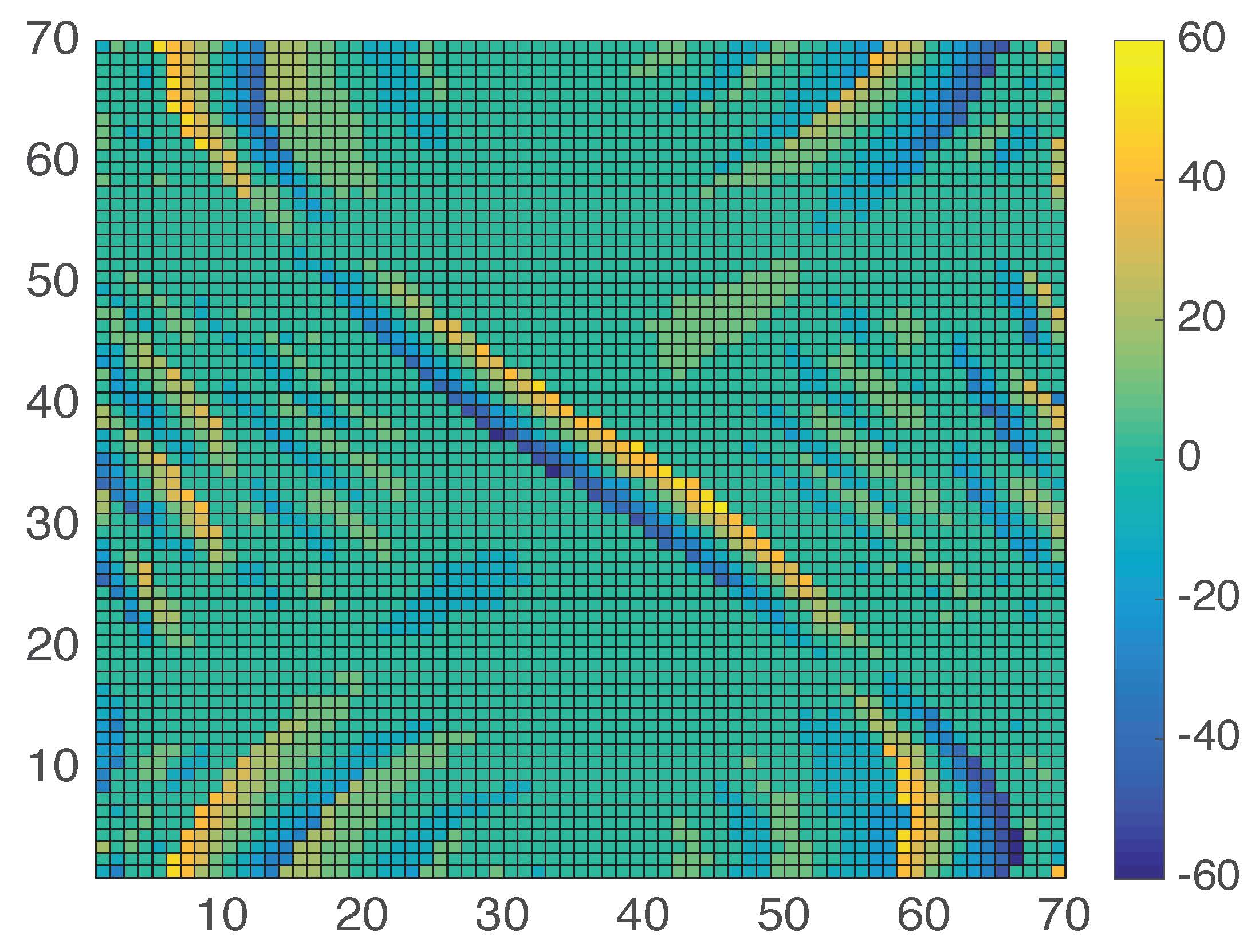}
\caption{Case 1: a) Lyapunov measure plot; b) Control values plot}
\label{case1_fig1}
\end{center}
\end{figure}

\begin{figure}
\begin{center}
\includegraphics[scale = 0.2]{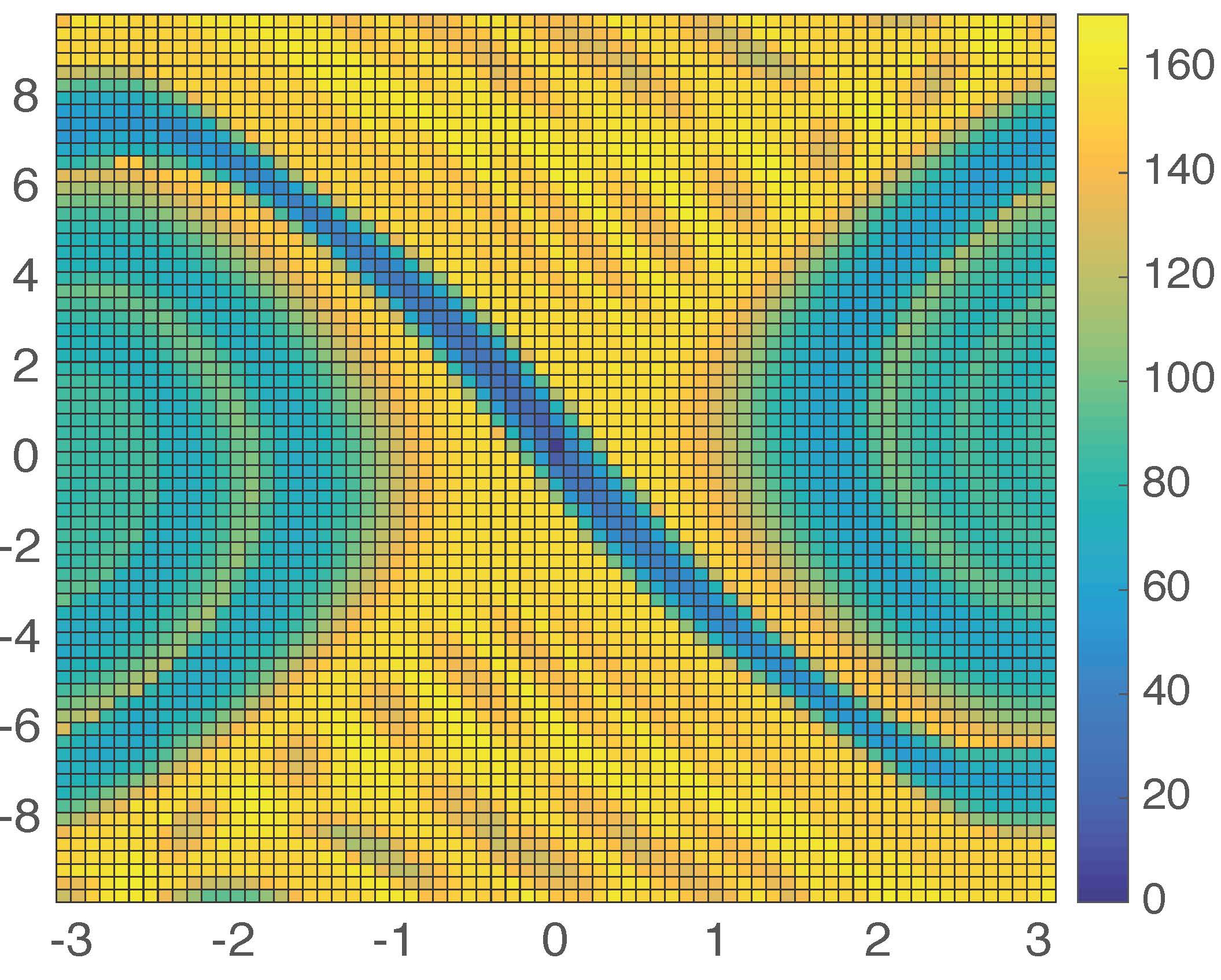}
\includegraphics[scale = 0.1]{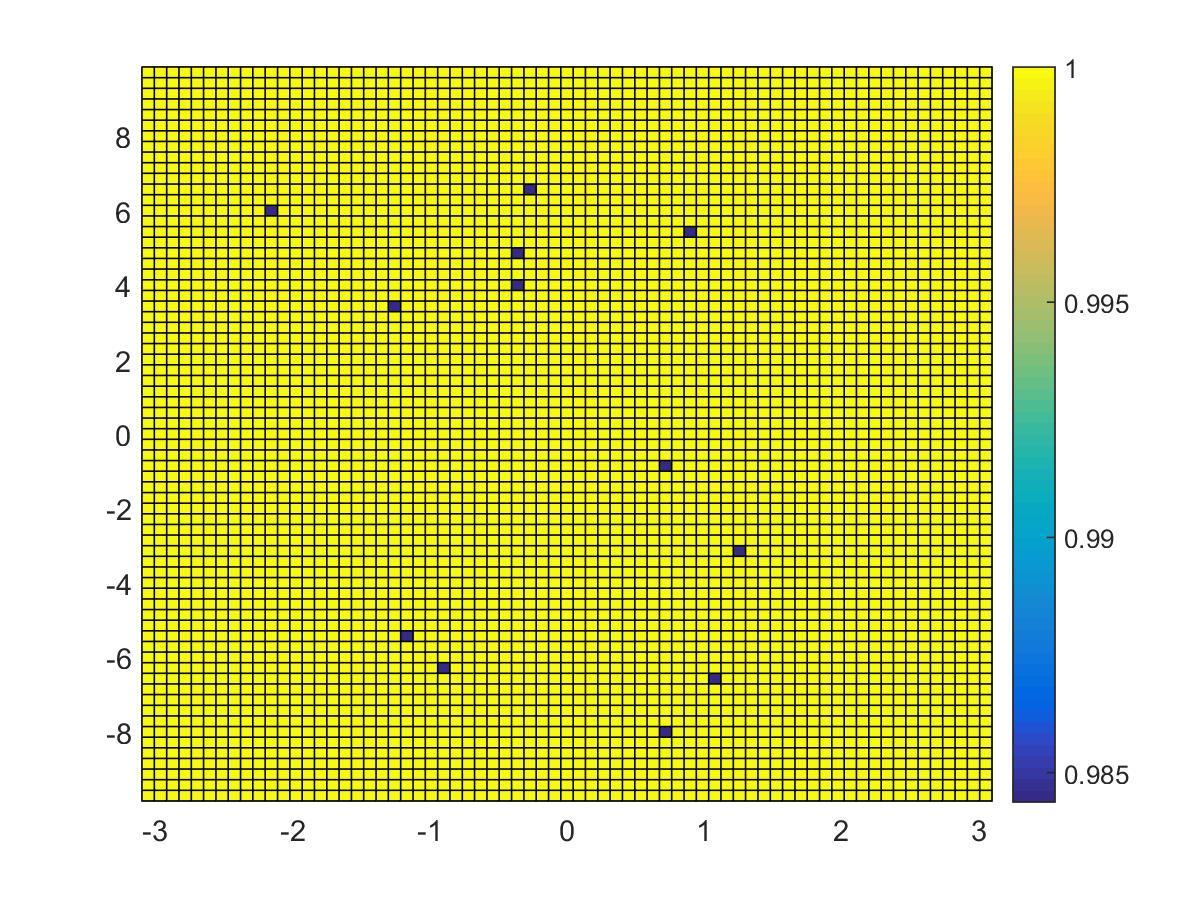}
\caption{Case 1: a) Optimal cost plot; b) Plot for percentage of initial condition optimally stabilized to origin}
\label{case1_fig2}
\end{center}
\end{figure}

\begin{figure}
\begin{center}
\includegraphics[scale = 0.23]{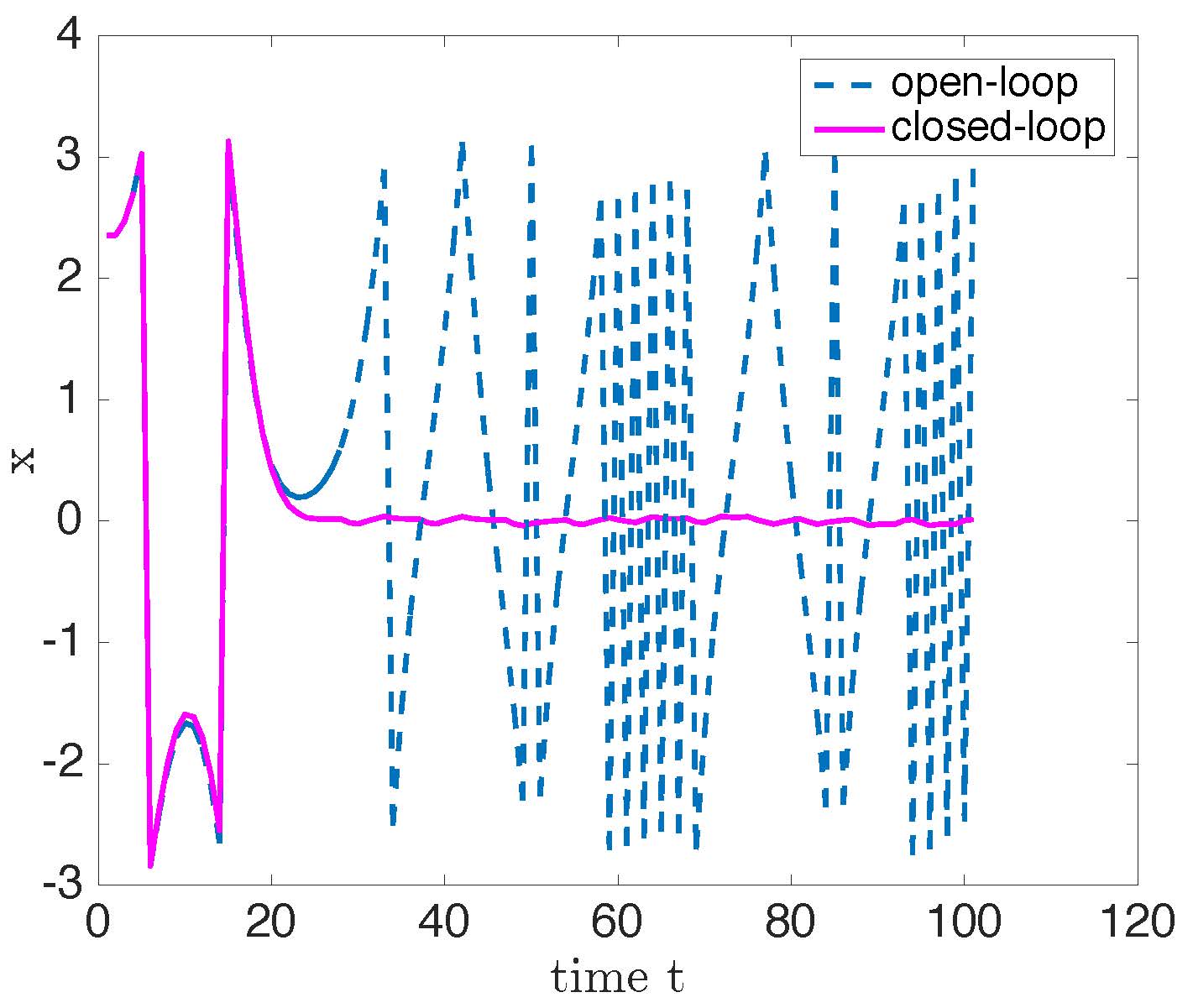}
\includegraphics[scale = 0.23]{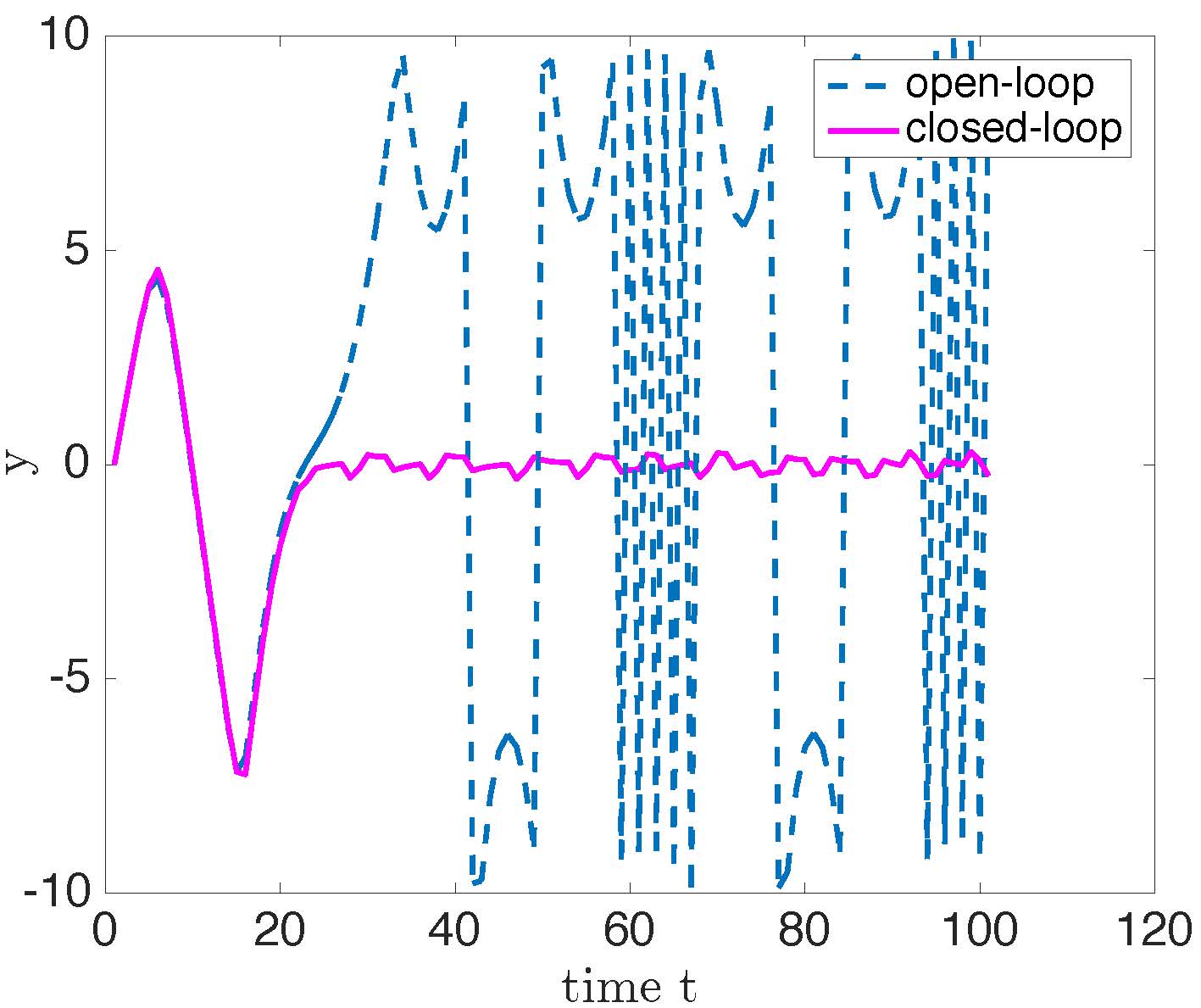}
\caption{Case 1: a) Sample $x$ trajectory; b) Sample $y$ trajectory}
\label{case1_fig3}
\end{center}
\end{figure}

\begin{figure}
\begin{center}
\includegraphics[scale = 0.10]{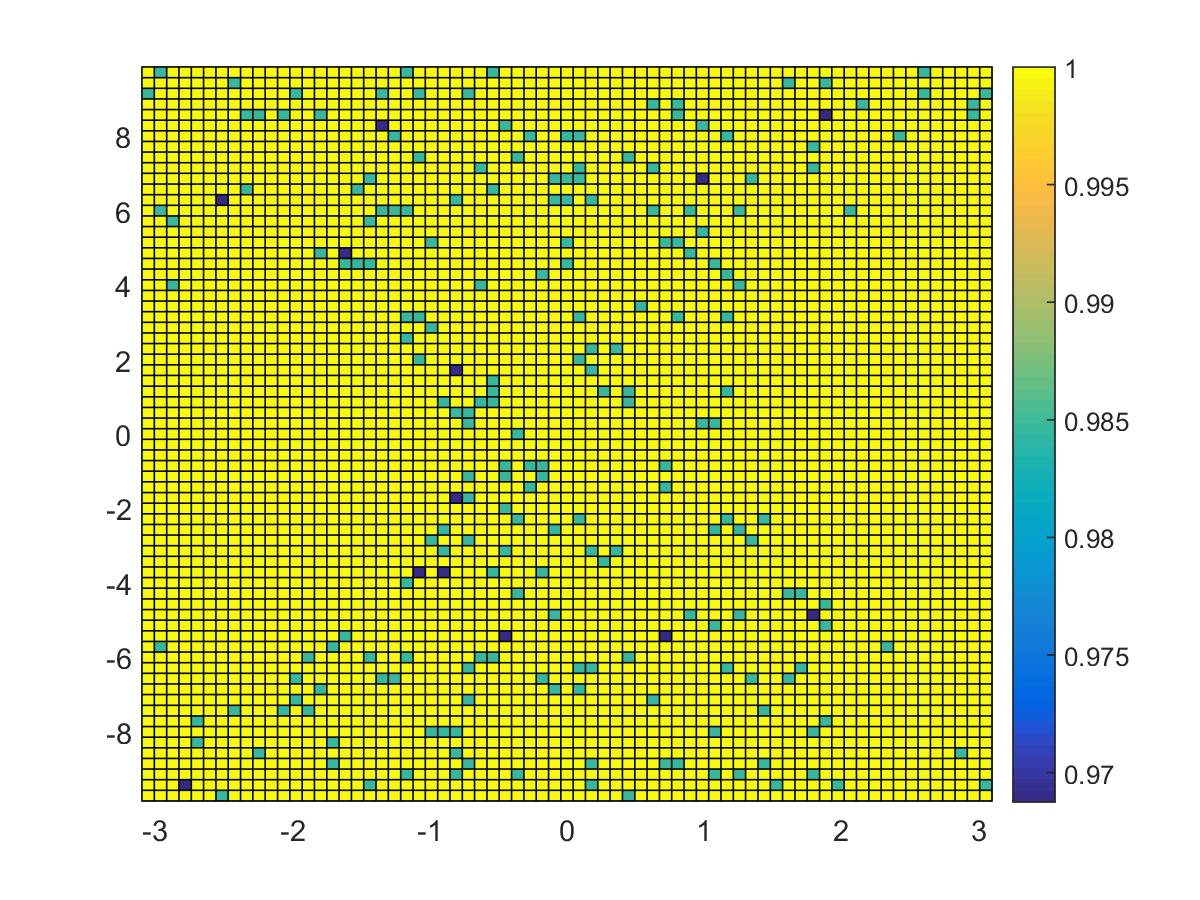}
\includegraphics[scale = 0.10]{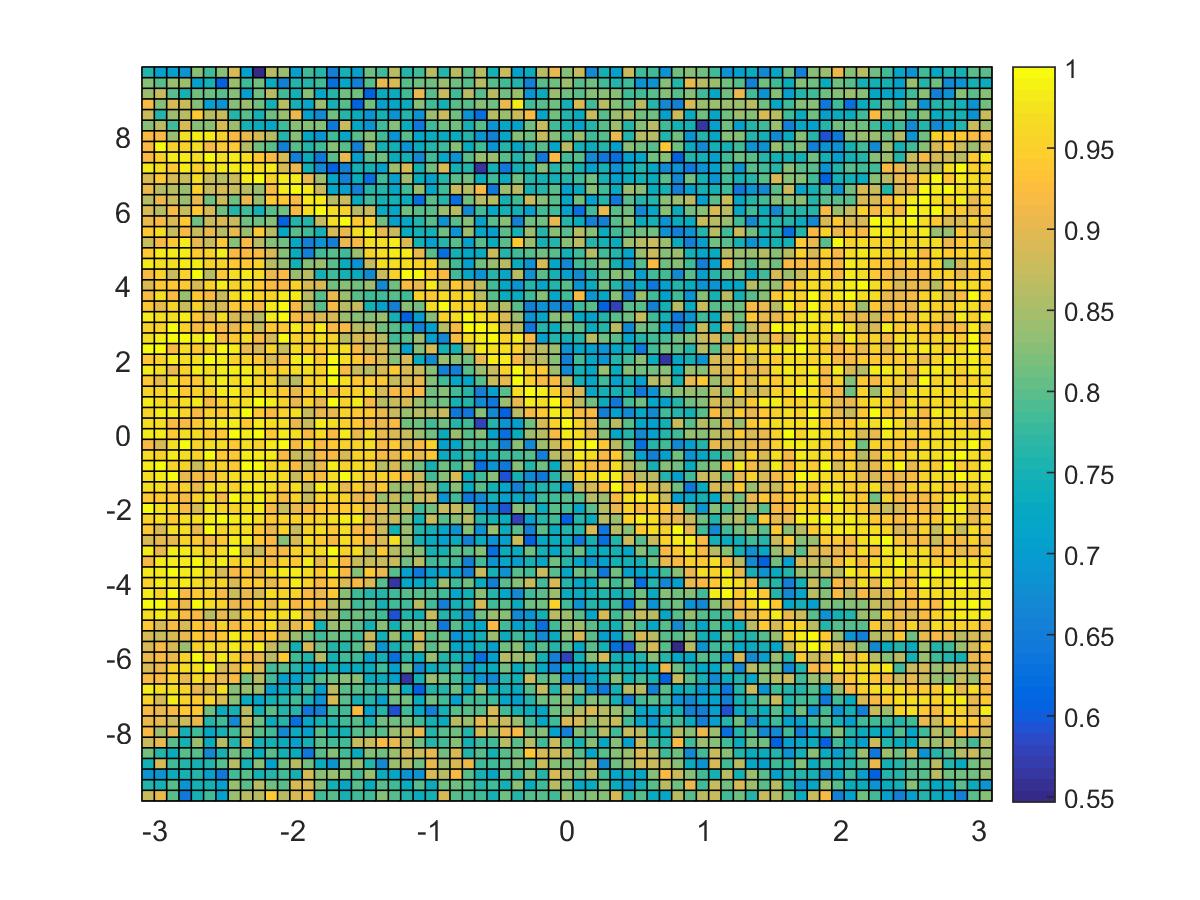}
\caption{Case 2: Plots for percentage of initial condition attracted to origin for erasure probability $1-p$ a) 0.15; b) 0.5.}
\label{case2_fig1}
\end{center}
\end{figure}

\section{Conclusions}\label{section_conclusion}
Transfer Perron-Frobenius operator-based framework is introduced for optimal stabilization of stochastic nonlinear systems. Weaker set-theoretic notion of almost everywhere stability is used for the design of optimal stabilizing feedback controller. The optimal stabilization problem is formulated as an infinite dimensional linear program. The finite dimensional approximation of the linear program and the associated optimal feedback controller is obtained using set-oriented numerics.

\bibliographystyle{IEEEtran}
\bibliography{ref}

\begin{thebibliography}{10}
\providecommand{\url}[1]{#1}
\csname url@rmstyle\endcsname
\providecommand{\newblock}{\relax}
\providecommand{\bibinfo}[2]{#2}
\providecommand\BIBentrySTDinterwordspacing{\spaceskip=0pt\relax}
\providecommand\BIBentryALTinterwordstretchfactor{4}
\providecommand\BIBentryALTinterwordspacing{\spaceskip=\fontdimen2\font plus
\BIBentryALTinterwordstretchfactor\fontdimen3\font minus
  \fontdimen4\font\relax}
\providecommand\BIBforeignlanguage[2]{{%
\expandafter\ifx\csname l@#1\endcsname\relax
\typeout{** WARNING: IEEEtran.bst: No hyphenation pattern has been}%
\typeout{** loaded for the language `#1'. Using the pattern for}%
\typeout{** the default language instead.}%
\else
\language=\csname l@#1\endcsname
\fi
#2}}

\bibitem{raghunathan2014optimal}
A.~Raghunathan and U.~Vaidya, ``Optimal stabilization using lyapunov
  measures,'' \emph{IEEE Transactions on Automatic Control}, vol.~59, no.~5,
  pp. 1316--1321, 2014.

\bibitem{Dellnitz00}
M.~Dellnitz and O.~Junge, \emph{Set oriented numerical methods for dynamical
  systems}.\hskip 1em plus 0.5em minus 0.4em\relax World Scientific, 2002, pp.
  221--264.

\bibitem{Dellnitiz_almostinvariant}
G.~Froyland and M.~Dellnitz, ``Detecting and locating near-optimal
  almost-invariant sets and cycles,'' \emph{SIAM Journal on Scientific
  Computing}, vol.~24, no.~6, pp. 1839--1863, 2003.

\bibitem{froyland2009almost}
G.~Froyland and K.~Padberg, ``Almost-invariant sets and invariant
  manifolds?connecting probabilistic and geometric descriptions of coherent
  structures in flows,'' \emph{Physica D: Nonlinear Phenomena}, vol. 238,
  no.~16, pp. 1507--1523, 2009.

\bibitem{budivsic2012applied}
M.~Budi{\v{s}}i{\'c}, R.~Mohr, and I.~Mezi{\'c}, ``Applied koopmanisma),''
  \emph{Chaos: An Interdisciplinary Journal of Nonlinear Science}, vol.~22,
  no.~4, p. 047510, 2012.

\bibitem{amit_koopmanestiamtor}
A.~Surana and A.~Banaszuk, ``Linear observer synthesis for nonlinear systems
  using koopman operator framework,'' in \emph{10th IFAC Symposium on Nonlinear
  Control Systems, Monterey CA}, 2016.

\bibitem{VaidyaMehtaTAC}
U.~Vaidya and P.~G. Mehta, ``Lyapunov measure for almost everywhere
  stability,'' \emph{IEEE Transactions on Automatic Control}, vol.~53, pp.
  307--323, 2008.

\bibitem{Rajeev_continuous_time_journal}
R.~Rajaram, U.~Vaidya, M.~Fardad, and B.~Ganapathysubramanian, ``Almost
  everywhere stability: Linear transfer operator approach,'' \emph{Journal of
  Mathematical analysis and applications}, vol. 368, pp. 144--156, 2010.

\bibitem{Vaidya_CLM}
U.~Vaidya, P.~Mehta, and U.~Shanbhag, ``Nonlinear stabilization via control
  {L}yapunov measure,'' \emph{IEEE Transactions on Automatic Control}, vol.~55,
  pp. 1314--1328, 2010.

\bibitem{vaidya2007observability}
U.~Vaidya, ``Observability gramian for nonlinear systems,'' in \emph{Decision
  and Control, 2007 46th IEEE Conference on}.\hskip 1em plus 0.5em minus
  0.4em\relax IEEE, 2007, pp. 3357--3362.

\bibitem{sinha2016operator}
S.~Sinha, U.~Vaidya, and R.~Rajaram, ``Operator theoretic framework for optimal
  placement of sensors and actuators for control of nonequilibrium dynamics,''
  \emph{Journal of Mathematical Analysis and Applications}, vol. 440, no.~2,
  pp. 750--772, 2016.

\bibitem{MVCDC05}
P.~G. Mehta and U.~Vaidya, ``On stochastic analysis approaches for comparing
  dynamical systems,'' in \emph{Proceeding of IEEE Conference on Decision and
  Control}, Spain, 2005, pp. 8082--8087.

\bibitem{vaidya2015stochastic}
U.~Vaidya, ``Stochastic stability analysis of discrete-time system using
  lyapunov measure,'' in \emph{2015 American Control Conference (ACC)}.\hskip
  1em plus 0.5em minus 0.4em\relax IEEE, 2015, pp. 4646--4651.

\bibitem{Rantzer01}
A.~Rantzer, ``A dual to {L}yapunov's stability theorem,'' \emph{Systems \&
  Control Letters}, vol.~42, pp. 161--168, 2001.

\bibitem{van2006almost}
R.~Van~Handel, ``Almost global stochastic stability,'' \emph{SIAM journal on
  control and optimization}, vol.~45, no.~4, pp. 1297--1313, 2006.

\bibitem{LassHernBook}
O.~Hern\'{a}ndez-Lerma and J.~B. Lasserre, \emph{Discrete-time Markov Control
  Processes: Basic Optimality Criteria}.\hskip 1em plus 0.5em minus 0.4em\relax
  Springer-Verlag, New York, 1996.

\bibitem{approxInfLP}
------, ``Approximation schemes for infinite linear programs,'' \emph{SIAM J.
  Optimization}, vol.~8, no.~4, pp. 973--988, 1998.

\bibitem{Hernandez_ocp}
D.~Hernandez-Hernandez, O.~Hernandez-Lerma, and M.~Taksar, ``A linear
  programming approach to deterministic optimal control problems,''
  \emph{Applicationes Mathematicae}, vol.~24, no.~1, pp. 17--33, 1996.

\bibitem{Lasserre_ocp}
J.~Lasserre, C.~Prieur, and D.~Henrion, ``Nonlinear optimal control: Numerical
  approximation via moment and {LMI}-relaxations,'' in \emph{Proceeding of IEEE
  {C}onference on {D}ecision and {C}ontrol}, Seville, Spain, 2005.

\bibitem{Gait_ocp}
V.~Gaitsgory and S.~Rossomakhine, ``Linear programming approach to
  deterministic long run average optimal control problems,'' \emph{SIAM J.
  Control ad Optimization}, vol.~44, no.~6, pp. 2006--2037, 2006.

\bibitem{Meyn_sadhana}
S.~Meyn, ``{Algorithm for optimization and stabilization of controlled {M}arkov
  chains},'' \emph{{Sadhana}}, vol.~24, pp. 339--367, 1999.

\bibitem{viscosity_solnHJB_book}
M.~Bardi and I.~Capuzzo-Dolcetta, \emph{{Optimal control and viscosity
  solutions of Hamilton-Jacobi-Bellman equations}}.\hskip 1em plus 0.5em minus
  0.4em\relax Boston: Birkhauser, 1997.

\bibitem{cell-cell1}
L.~G. Crespo and J.~Q. Sun, ``Solution of fixed final state optimal control
  problem via simple cell mapping,'' \emph{Nonlinear dynamics}, vol.~23, pp.
  391--403, 2000.

\bibitem{Junge_Osinga}
O.~Junge and H.~Osinga, ``A set-oriented approach to global optimal control,''
  \emph{ESAIM: Control, Optimisation and Calculus of Variations}, vol.~10,
  no.~2, pp. 259--270, 2004.

\bibitem{Junge_scl_05}
L.~Gr{\"u}ne and O.~Junge, ``A set-oriented approach to optimal feedback
  stabilization,'' \emph{Systems Control Lett.}, vol.~54, no.~2, pp. 169–--180,
  2005.

\bibitem{arvind_ocp_online}
A.~Raghunathan and U.~Vaidya, ``Optimal stabilization using {L}yapunov
  measures,'' http://www.ece.iastate.edu/{$\sim$}ugvaidya/publications.html,
  2012.

\bibitem{disintegration}
H.~Furstenberg, \emph{Recurrence in Ergodic theory and Combinatorial Number
  Theory}.\hskip 1em plus 0.5em minus 0.4em\relax Princeston, New Jersey:
  Princeston University Press, 1981.

\bibitem{Lasota}
A.~Lasota and M.~C. Mackey, \emph{Chaos, Fractals, and Noise: Stochastic
  Aspects of Dynamics}.\hskip 1em plus 0.5em minus 0.4em\relax New York:
  Springer-Verlag, 1994.

\end{thebibliography}

%
%
%





\end{document}